\theoremstyle{theorem}
\newtheorem{theorem}{Theorem}[section]
\theoremstyle{theorem}
\newtheorem{proposition}{Proposition}[section]
\theoremstyle{definition}
\theoremstyle{lemma}
\newcommand{\Label}[1]{\label{#1}}
\newcommand\Id{1\!\!\mathbb{I}}
\newcommand\RR{\mathbb{R}}
\newcommand\NN{\mathbb{N}}
\newcommand\CC{\mathbb{C}}
\newcommand{\cN}{\mathcal{N}}
\newcommand{\cF}{\mathcal{F}}
\newcommand{\cD}{\mathcal{D}}
\newcommand{\ZZ}{\mathbb{Z}}
\newcommand{\mfM}{\mathfrak{M}}
\newcommand{\al}{\alpha}
\newcommand{\pr}{\prime}
\newcommand{\Qp}{\mathbb Q_p}
\newcommand{\Fx}{\cF_{x\to \xi}}
\newcommand{\Fi}{\cF^{-1}_{\xi\to x}}
\newcommand{\w}[1]{\widetilde{#1}}
\newcommand{\wh}[1]{\widehat{#1}}
\newcommand{\DAN}{\Delta^\al_N}
\newcommand{\vep}{\varepsilon}
\newcommand{\mes}{\text{mes\,}}
\newcommand{\vph}{\varphi}
\newcommand{\si}{\sigma}
\newcommand{\la}{\lambda}
\newcommand{\ga}{\gamma}
\newcommand{\be}{\beta}
\newcommand{\OO}{\Omega}
\newcommand{\Har}{{H^{\al}_{\OO,\rho}}}
\newcommand{\Hao}{{H^{\al}_{\OO,0}}}
\newcommand{\Hag}{{H^{\al}_{\OO,g}}}
\newcommand{\KO}{{(K^n\times K^n)\backslash (\OO^c\times \OO^c)}}
\newcommand{\iKO}{\int\limits_{(K^n\times K^n)\backslash (\OO^c\times \OO^c)}}
\newcommand{\bs}{\backslash}
\newcommand{\bJ}{{\bf J}}
\numberwithin{equation}{section}
\begin{document}

\title{Non-Archimedean Neumann problem: weak and strong solutions}

\author{Alexandra V. Antoniouk\\
Institute of Mathematics, National Academy of Sciences of Ukraine,\\ Tereshchenkivska 3, Kyiv, 01024 Ukraine\\
American University Kyiv, Poshtova Sq 3, 04070, Kyiv, Ukraine\\
E-mail: antoniouk.a@gmail.com
\and
Anatoly N. Kochubei\\
Institute of Mathematics, National Academy of Sciences of Ukraine,\\ Tereshchenkivska 3, Kyiv, 01024 Ukraine \\
E-mail: kochubei@imath.kiev.ua}

\date{}


\maketitle
\bigskip

\begin{abstract}
{
We consider the Neumann problem for the equation with the Vladimirov-Taibleson fractional differentiation operator over a non-Archimedean local field. We study weak solutions following the method by Dipierro, Ros-Oton and Valdinoci (2017). Our investigation of strong solutions is based on the ultrametric identities for the operator under consideration.}

\end{abstract} 

\section{Introduction} \Label{sec1}

Let $K$ be a non-Archimedean local field, $\OO\subset K^n$ be a bounded open set. A model example of a pseudo-differential operator acting on real- or complex-valued functions on $\OO$ is the Vladimirov-Taibleson fractional defferential operator

\begin{equation}\Label{1-1}\
\big(D^{\al,n} u\big)(x)=c_{n,\al} \int\limits_{K^n} \frac{u(x)-u(y)}{\Vert x-y\Vert_{K^n}^{n+\al}}\, dy, \quad x\in \OO,
\end{equation}
where $\Vert z \Vert_{K^n} = \max\limits_{j=1,\ldots,n} \vert z_j\vert_K$ for $z=(z_1,\ldots, z_n) \in K^n$,
\[c_{n,\al}=\frac{q^{\al}-1}{1-q^{-\al-n}}, \quad \al >0,\]
$q$ is the residue cardinality of $K$, and the precise understanding of the integral in \eqref{1-1} depends on smoothness properties of a function $u$.

The operator \eqref{1-1} and related equations have been studied by many authors, see the monographs \cite{VVZ,K2001,AKS,T,KKZ,Z2016} and many recent papers, such as \cite{BGPW,AKN,AKSe,K2023,KK} and many others. This operator is obviously nonlocal; its form resembles that of the fractional Laplacian of real analysis. Both for the latter and $D^{\al,n}$, the Dirichlet problem assign the values of solution not on the boundary of $\OO$, but on its complement, in our case $\OO^c=K^n\backslash \OO$. Note that the most important open subsets of a non-Archimedean local field, like balls and spheres, are simultaneously open and closed, thus have empty boundaries. The classical notion of "boundary value problem" does not make sense in this situation.

Nevertheless a version of the Neumann problem introduced in \cite{DRV} for the fractional Laplacian can be used in our situation. Namely, denote
\begin{equation}\Label{1-2}\
\big(\cN_\al u\big)(x)=c_{n,\al}\int\limits_{\OO} \frac{u(x)-u(y)}{\Vert x-y\Vert_{K^n}^{n+\al}}\, dy, \quad x\in K^n\backslash\OO.
\end{equation}
The non-Archimedean Neumann problem is formulated as follows:
 \begin{equation}\Label{1-3}
	\begin{cases}
		D^{\al,n}u=f &\text{on $\OO$}\\
		\cN_\al\ u =g &\text{on $\OO^c$}
	\end{cases}
\end{equation}
%
where $f,g$ are given functions. Note that a solution $u$ should be defined on the whole space $K^n$ though the first equation in \eqref{1-3} is checked only on $\OO$.

In this paper we develop two alternative settings to understand solution of \eqref{1-3}. While weak solutions are defined and investigated in a way similar to the one developed in \cite{DRV} for real pseudo-differential operators, strong solutions are constructed under the stronger assumptions, using specific ultrametric properties of non-Archiemedean objects.

The structure of this paper is as follows. In Section 2, we present main definitions and facts regarding non-Archimedean local fields. Section 3 is devoted to appropriate analysis of fractional Sobolev spaces, while in Section 4 we deal with some integration by parts formulas. Sections 5 and 6 are devoted to weak solutions. Finally, in Section 7 strong solution are considered.

\setcounter{section}{1} \setcounter{equation}{0} 
\section{Local fields} \Label{sec2}
\setcounter{section}{2} \setcounter{equation}{0}

A (non-Archimedean) local field is a non-discrete totally disconnected locally compact topological field (see for details, e.g. \cite{K2001, Se, We}).

An important construction related to local fields is that of field extension. If a field $k$ is a subfield of field $K$, then $K$ is called an extension of $k$. This relation is often denoted as $K/k$. In the above situation, the field $K$ is a vector space over $k$, and we speak about a finite extension $K/k$, if this vector space is finite dimensional.

A non-Archimedean local field $K$ is isomorphic either to a finite extension of the field $\Qp$ of $p$-adic numbers (where $p$ is a prime number), if $K$ has characteristic zero, or to the field of formal Laurent series with coefficients from a finite field, if $\text{\it char}\, K >0$.

Note that by Ostrowski's theorem, $\Qp$ is the only possible alternative to $\RR$ as a completion of the field of rational numbers. This shows the importance of non-Archimedean mathematics as one of the principal branches of mathematical science.

Any local field is endowed with absolute value $\vert \cdot \vert_K$, such that: 1) $\vert x\vert_K=0$, if and only if $x=0$; 2) $\vert xy\vert_K = \vert x\vert_K\cdot\vert y \vert_K$; 3) $\vert x+y\vert_K\leq \max (\vert x\vert_K, \vert y\vert_K)$. The last property called the ultrametric inequality, implies the equality $\vert x+y\vert_K=\vert x\vert_K$, if $\vert y\vert_K < \vert x\vert_K$.

Note that the ring of integers $O = \{x\in K\colon \vert x\vert_K\leq 1\}$ contains an ideal $P=\{x\in K\colon \vert x\vert_K <1\}$, which in turn contains such an element $\be$ that $P=\be O$. We call the absolute value normalized, if $\vert \be\vert_K = q^{-1}$, where $q$ is the cardinality of the finite field $O/P$. We always assume this property of our absolute value. A normalized absolute value takes exactly the values $q^m$, $m\in\ZZ$.

The additive group of a local field $K$ is self-dual, so that the Fourier analysis in $K$ is similar to the classical one. Let $\chi$ be a fixed non-constant additive character having rank zero, that is $\chi (x)\equiv 1$ as $x$ belongs to the ring of integers, while $\chi(x_0)\neq 1$ for some $x_0\in K$ with $\vert x_0\vert_K = q.$

The Fourier transform of a complex-valued function $f\in L^1(K)$ is defined as
\begin{equation*}
(\cF f)(\xi) \equiv \wh{f}(\xi) = \int\limits_{K} \chi(x\, \xi)f(x)\, dx, \quad \xi\in K,
\end{equation*}
where $dx$ is the Haar measure on the additive group of $K$ normalized in such a way that the measure of $O$ equals 1.
If $\cF f = \wh{f}\in L^1(K)$, then the inversion rule
\begin{equation*}
f(x)  = \int\limits_{K} \chi(-x\, \xi)\wh f(\xi)\, d\xi
\end{equation*}
holds. We will also denote $\w f = \cF^{-1}f$. Sometimes it is convenient to write $\Fx$ and $\Fi$ spe\-ci\-fying the arguments of appropriate functions.
 This Fourier analysis extends easily to functions on $K^n$.

Note that the Haar measure on $K$ or on $K^n$ is unique (up to a multiplicative coefficient) on the Borel $\si$-algebra. Meanwhile it can be extended onto a wider $\si$-algebra $\mfM$ with the following completeness property: if $M\in \mfM$ has the extended measure zero, then every subset of $M$ belongs to $\mfM$. For the proof see Lemma 16.4 in \cite{HR}. Below we assume that the Haar measures have undergone the above extension procedure.

The Fourier transform preserves the Bruhat-Schwartz space $\cD(K)$ of text functions, consisting of locally constant functions with compact supports. The local constancy of a function $f\colon K^n\to \CC$ means the existence of such an integer $\ell$ that for any $s\in K^n$
\[f(x+x^\pr)=f(x), \quad \text{whenever}\ \ \vert x^\pr\vert_K\leq q^{-\ell}. \]
The Fourier transform extends to the dual space $\cD^\pr(K^n)$ called the space of Bruhat-Schwartz distributions.

For $\OO = K^n$ the Vladimirov-Taibleson operator \eqref{1-1} admits, if $u\in\cD(K^n)$, the representation
\[\big(D^{\al,n} u\big)(x)=\Fi \Big(\Vert \xi\Vert^\al_{K^n}\Fx u\Big),\]
where $\Vert \cdot\Vert_{K^n}$ is  the natural non-Archimedean norm on $K^n$:
\[\Vert (x_1,\ldots, x_n)\Vert_{K^n}=\max\limits_{1\leq j \leq n} \vert x_j\vert_K.\]

Below we will need some integration formulas. For methods of calculating some integrals ove subsets of local fields see \cite{T, VVZ,K2001}. We have
\begin{align}\Label{2-1}\
\int\limits_{\Vert x \Vert_{K^n}\leq q^N}\, dx &= q^{nN}, \quad N\in\ZZ;\\
\Label{2-2}
\int\limits_{\Vert y \Vert_{K^n}> q^N}\frac{ dy}{\vert y\vert_K^{\al+1}}&= \big(1-\frac{1}{q}\big)\sum\limits_{j=N+1}^\infty q^{-j(\al+1)}q^j=\frac{1-\frac{1}{q}}{1-q^{-\al}}q^{-(N+1)\al}
=\frac{q-1}{q(q^\al-1)}q^{-N \al};\\
\Label{2-3}
\int\limits_{\Vert x \Vert_{K^n}\leq q^N}\chi(ax)\, dx &
=\begin{cases}
		q^N &\text{if}\quad \vert a\vert_K\leq q^{-N};\\
		0 &\text{if}\quad \vert a\vert_K\geq q^{-N+1}.
	\end{cases}
\end{align}

\section{Fractional Sobolev Space}
\Label{sec3}

From now on, we assume that $\OO$ is a non-Archimedean ball
\[B_N=\{x\in K^n \colon \Vert x\Vert_{K^n}\leq q^N\}.\]

Let us introduce a Hilbert space corresponding to the problem \eqref{1-3}.

Suppose that $\rho \in L^1(K^n\backslash\OO)$; $u,v \colon K^n\to \RR$. Denote
\begin{align}\Label{3-1}\ \nonumber
\Vert u \Vert_\Har &= \Bigg\{\Vert u\Vert^2_{L^2(\OO)} +\Vert \,
\vert \rho\vert^{1/2} u \Vert^2_{L^2(K^n\backslash \OO)}+\\
&+\int\limits_{(K^n\times K^n)\backslash (\OO^c\times \OO^c)} \frac{\vert u(x)-u(y)\vert^2}{\Vert x - y\Vert_{K^n}^{n+\al}}\, dx\, dy\Bigg\}^{1/2};\\ \nonumber
(u,v)_{\Har}&=\int\limits_{\OO} uv \, dx +\int\limits_{K^n\backslash \OO} \vert \rho\vert \, uv\, dx + \\
\Label{3-2}\
&+\iKO \frac{\big(u(x)-u(y)\big)\big(v(x)-v(y)\big)}{\Vert x-y\Vert_{K^n}^{n+\al}}\,dx\,dy.
\end{align}
Correspondingly, the space $\Har=\big\{u\colon K^n\to \RR\  \text{measurable}; \Vert u \Vert_\Har< \infty \big\}$. Our definitions are  similar to that given in \cite{DRV}.
\begin{proposition}\Label{prop1}\
$\Har$ is a real Hilbert space with the inner product \eqref{3-2}.
\end{proposition}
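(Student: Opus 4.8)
The plan is to verify first that \eqref{3-2} is a genuine inner product whose induced norm is \eqref{3-1}, and then to establish completeness. Bilinearity and symmetry of $(\cdot,\cdot)_{\Har}$ are immediate from the definition, and comparing \eqref{3-1} with \eqref{3-2} shows $\Vert u\Vert_{\Har}^2=(u,u)_{\Har}$, so each of the three terms of $(u,u)_{\Har}$ is nonnegative. The only nontrivial point is definiteness: if $\Vert u\Vert_{\Har}=0$ then $\Vert u\Vert_{L^2(\OO)}=0$, hence $u=0$ a.e. on $\OO$; moreover the vanishing of the double integral forces $u(x)=u(y)$ for a.e. $(x,y)$ in the integration region, in particular for a.e. $(x,y)\in\OO\times\OO^c$. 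Combining these, for a.e. $y\in\OO^c$ one gets $u(y)=u(x)=0$ for a.e. $x\in\OO$, so $u=0$ a.e. on $K^n$. Note that it is the cross term $\OO\times\OO^c$ of the Gagliardo-type integral, not the weight $\rho$, that pins down $u$ on $\OO^c$; this matters because $\rho$ is only assumed integrable and may vanish on a set of positive measure.

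For completeness I would read the three terms of the norm as the squared norms in the three spaces $L^2(\OO)$, $L^2(K^n\bs\OO,\,\vert\rho\vert\,dx)$, and $L^2(R,d\mu)$, where $R=\KO$ and $d\mu=\Vert x-y\Vert_{K^n}^{-n-\al}\,dx\,dy$. The map $u\mapsto\big(u|_\OO,\ u|_{\OO^c},\ u(x)-u(y)\big)$ is then a linear isometry of $\Har$ into the Hilbert direct sum of these spaces, so it suffices to show that its image is closed. Given a Cauchy sequence $(u_k)$ in $\Har$, completeness of these $L^2$-spaces yields in particular limits $u_k\to u^{(1)}$ in $L^2(\OO)$ and $u_k(x)-u_k(y)\to V$ in $L^2(R,d\mu)$. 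Passing to a subsequence, I may assume these convergences hold almost everywhere (on $R$ the measures $\mu$ and Lebesgue measure are mutually absolutely continuous, the diagonal being negligible).

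The crux is to reconstruct a single measurable function $u$ on all of $K^n$ whose differences reproduce $V$. On $\OO$ set $u=u^{(1)}$. For a.e. $(x,y)\in\OO\times\OO^c$ one has $u_k(y)=u_k(x)-\big(u_k(x)-u_k(y)\big)\to u^{(1)}(x)-V(x,y)$; since the left-hand side is independent of $x$, Fubini shows that for a.e. $y\in\OO^c$ the limit $u(y):=\lim_k u_k(y)$ exists and equals $u^{(1)}(x)-V(x,y)$ for a.e. $x\in\OO$. This defines a measurable $u$ on $K^n$ with $u_k\to u$ a.e., and examining the three sub-regions $\OO\times\OO$, $\OO\times\OO^c$, $\OO^c\times\OO$ separately shows $V(x,y)=u(x)-u(y)$ a.e. on $R$. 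Finally, applying Fatou's lemma to each of the three nonnegative integrands in $\Vert u_k-u_m\Vert_{\Har}^2$, letting $m\to\infty$ along the a.e.-convergent subsequence and invoking the Cauchy property, gives $\Vert u_k-u\Vert_{\Har}\to0$ together with $u\in\Har$. Since a Cauchy sequence possessing a norm-convergent subsequence itself converges, this proves completeness.

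I expect the main obstacle to be precisely this reconstruction step. The weight $\rho$ carries no information about $u$ on the set where it vanishes, so the values of $u$ on $\OO^c$ must be recovered entirely through the nonlocal coupling across $\OO\times\OO^c$; one has to check carefully, via Fubini and the selection of a good slice of $x\in\OO$, that the recovered boundary values are consistent and assemble into a genuine measurable function. Once this is done, the remaining estimates are routine applications of Fatou's lemma.
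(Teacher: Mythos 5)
Your proof is correct and follows essentially the same route as the paper: completeness is reduced to reconstructing $u$ on $\OO^c$ from the a.e.\ limits of $u_k$ on $\OO$ and of the difference quotients on $\OO\times\OO^c$ (via Fubini and a good slice in $x$), and then concluded with Fatou's lemma. Your only additions are the explicit check of positive-definiteness and the packaging of the norm as an isometric embedding into a direct sum of three $L^2$-spaces, both of which the paper leaves implicit.
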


\begin{proof} The nontrivial part of the proof is that of completeness. The scheme of proof follows \cite{DRV}.

Let $u_k\to u$ in $L^2(\OO)$ be a Cauchy sequence with respect to the norm \eqref{3-1}. Taking, if necessary a subsequence we may assume that $u_k\to u$ almost everywhere on $\OO$. Therefore there exists such a subset $Z_1\subset \OO$ that $\text{mes}\,(Z_1)=0$ and $u_k\to u$ for all $x\in \OO \backslash Z_1$.

Denote by $\Id_{(K^n\times K^n)\backslash (\OO^c\times \OO^c)}(x,y)$ the indicator function of the domain of integration in \eqref{3-1}.
Let us write
\[E_u(x,y)=\frac{u(x)-u(y)}{\Vert x-y\Vert_{K^n}^{(n+\al)/2}}\,\Id_{(K^n\times K^n)\backslash (\OO^c\times \OO^c)}(x,y).\]
Then
\[E_{u_k}(x,y)-E_{u_h}(x,y)=\frac{\big(u_k(x)-u_h(x) - u_k(y)+u_h(y)\big)}{\Vert x-y\Vert_{K^n}^{(n+\al)/2}}\,\Id_{(K^n\times K^n)\backslash (\OO^c\times \OO^c)}(x,y).\]

For each $\vep >0$, there exists such a natural number $N_\vep$ that if $k, h \geq N_\vep$, then, since $\{u_k\}$ is the Cauchy sequence,
\[\Vert E_{u_k}-E_{u_h}\Vert^2_{L^2(K^n)}\leq\iKO \frac{\vert (u_k-u_h)(x) - (u_k-u_h)(y)\vert^2}{\Vert x-y\Vert_{K^n}^{n+\al}}\, dx\,dy\leq \vep ^2.\]

Taking a subsequence, if necessary, we find that $E_{u_k}\to E_u$ in $L^2(K^n)$ and almost everywhere on $K^n$. The latter means the existence of such a subset $Z_2 \subset K^n \times K^n$ that $\text{mes}\, (Z_2)=0$ and
\begin{equation}\Label{3-3}\
E_{u_k}(x,y)\to E_u(x,y) \ \text{for all}\  (x.y)\in (K^n\times K^n)\backslash Z_2.
\end{equation}
For $x\in \OO$, let us set $S_x=\{y\in K^n\colon (x,y)\in K^n\backslash Z_2\}$. Denote
\begin{align*}
W &= \{(x,y)\in K^n \times K^n \colon x\in \OO, y \in K^n \backslash S_x\};\\
V&=\{x\in \OO\colon \text{mes}\,(K^n\backslash S_x)=0\}.
\end{align*}
We have
\begin{equation}\Label{3-4}\
W\subseteq Z_2.
\end{equation}

Indeed, if $(x,y)\in W$, then $y\in K^n\backslash S_x$, that is $y\notin S_x$, so that $(x,y)\in Z_2.$

Due to the completeness property of the Haar measure on its extended $\si$-algebra (see Section~\ref{sec2} above), it follows from \eqref{3-4} that the set $W$ is measurable and $\text{mes}\, W=0.$ By Fubini's theorem
\[0=\text{mes}\, W= \int_\OO \text{mes}\, (K^n\backslash S_x)\, dx,\]
so that $\text{mes}\, (K^n\backslash S_x)=0$ for almost all $x\in \OO$. This implies the equality $\text{mes}\, (\OO\backslash V)=0$.
Therefore
\[\text{mes}\, \big(\OO\backslash(V\backslash Z_1)\big)=\text{mes}\,\big((\OO\backslash V)\cup Z_1\big)\leq \text{mes}\, (\OO\backslash V)+ \text{mes}\, Z_1=0,\]
so that $V\backslash Z_1\neq \emptyset$.

Let us fix $x_0\in V\backslash Z_1$. Then $\lim\limits_{k\to \infty}u_k(x_0)=u(x_0).$ In addition, since $x_0\in V$, we have $\text{mes}\,(K^n \backslash S_{x_0})=0.$ This means that for all $y\in S_{x_0}$ (thus for almost all $y\in K^n$) we get $(x_0,y)\in K^n\backslash Z_2$ and by \eqref{3-3},
\[\lim\limits_{k\to\infty}E_{u_k}(x_0,y)=E(x_0,y).\]
Note that $\OO\times \OO^c\subset (K^n\times K^n)\backslash(\OO^c\times\OO^c)$ and
\[E_{u_k}(x_0,y)=\frac{u_k(x_0)-u_k(y)}{\Vert x_0-y\Vert_{K^n}^{(n+\al)/2}}.\]

Therefore for almost every $y\in\OO^c$
\begin{align*}
\lim\limits_{k\to \infty}u_k(y)&=\lim\limits_{k\to \infty}\Big\{u_k(x_0)- \Vert x_0-y\|_{K^n}^{(n+\al)/2}E_{u_k}(x_0,y)\Big\}=\\
&=u(x_0)-\| x_0 - y\|^{(n+\al)/2}E(x_0,y),
\end{align*}
so that in addition to its convergence almost everywhere on $\OO$, the sequence $\{u_k\}$ converges almost everywhere on $K^n\backslash\OO$, thus on $K^n$. Changing the notation, we may assume that $u_k$ converges almost everywhere on $K^n$ to some function $u$. Using Fatou's lemma, for any $\vep > 0$ we find $N_\vep \in \NN$, such that for any $h\geq N_\vep$
\begin{align*}
\vep^2& \geq \liminf\limits_{k\to\infty} \| u_h - u_k\|^2_\Har\geq \liminf\limits_{k\to\infty} \int\limits_\OO (u_h-u_k)^2\, dx+\liminf\limits_{k\to\infty}\int\limits_{\OO^c} |\rho |\, (u_k-u_h)^2 dy\, +\\
&+\liminf\limits_{k\to\infty} \iKO  \frac{\vert (u_k-u_h)(x) - (u_k-u_h)(y)\vert^2}{\Vert x-y\Vert_{K^n}^{n+\al}}\, dx\,dy\geq\\
&\geq \int\limits_\OO (u_h-u)^2 dx + \int\limits_{\OO^c} |\rho |\, (u_h-u)^2 dy\, + \int\limits_{\iKO} \frac{\vert (u_h-u)(x) - (u_h-u)(y)\vert^2}{\Vert x-y\Vert_{K^n}^{n+\al}}\, dx\,dy=\\
&=\| u_h -u \|^2_\Har,
\end{align*}
that is $u_h\to u$ in $\Har$ and this space is complete.
\end{proof}

Remark that even for $\rho = 0$ we should not write $H^\al (\OO)$ instead of $\Har$, since the functions from the latter space are defined on $K^n$.

We will need the following analog of the classical embedding theorem.

\begin{proposition}\Label{prop1}\
Let $F$ be a bounded subset of $\Hao$, $\OO = B_N$, consisting of real-valued functions. If
\begin{equation}\Label{3-5}\
\sup\limits_{f\in F}\int\limits_\OO\int\limits_\OO \frac{|(f(x)-f(y)|^2}{\Vert x-y\Vert_{K^n}^{n+\al}}\,dx\,dy < \infty,
 \end{equation}
 then the set $F$ is precompact in $L^2(\OO)$.
\end{proposition}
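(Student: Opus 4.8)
The plan is to deduce precompactness from the Fréchet--Kolmogorov (Riesz) criterion, realized in the ultrametric setting through finite-dimensional averaging operators; this is where the non-Archimedean structure makes the argument especially transparent. Since $\|\cdot\|_{L^2(\OO)}$ is one of the summands of $\|\cdot\|_{\Hao}$, boundedness of $F$ in $\Hao$ forces $F$ to be bounded in $L^2(\OO)$. As $L^2(\OO)$ is complete, it suffices to prove that $F$ is totally bounded.

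For each integer $m$ with $q^{-m}<q^N$ I would partition $\OO=B_N$ into the finitely many disjoint balls $B_1,\dots,B_{M_m}$ of radius $q^{-m}$, where $M_m=q^{n(N+m)}$ by \eqref{2-1}. Define the averaging (conditional expectation) operator
\[
(P_m f)(x)=\frac{1}{\text{mes}(B_i)}\int_{B_i} f(y)\,dy=q^{nm}\int_{B_i} f(y)\,dy,\qquad x\in B_i .
\]
By Jensen's inequality $\|P_m f\|_{L^2(\OO)}\leq\|f\|_{L^2(\OO)}$, and $P_m f$ is constant on each $B_i$, so $P_m$ maps $L^2(\OO)$ onto the finite-dimensional space $V_m$ of functions constant on every $B_i$, with $\dim V_m=M_m$. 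Hence $P_m(F)$ is a bounded subset of a finite-dimensional space, and is therefore precompact in $L^2(\OO)$.

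The heart of the argument is a uniform-in-$F$ estimate of the approximation error $\|f-P_m f\|_{L^2(\OO)}$, which plays the role of an ultrametric Poincaré inequality. Writing $\bar f_i$ for the constant value of $P_m f$ on $B_i$, the elementary variance identity gives
\[
\int_{B_i}|f(x)-\bar f_i|^2\,dx=\frac{q^{nm}}{2}\int_{B_i}\int_{B_i}|f(x)-f(y)|^2\,dx\,dy .
\]
For $x,y\in B_i$ one has $\Vert x-y\Vert_{K^n}\leq q^{-m}$, hence $\Vert x-y\Vert_{K^n}^{n+\al}\leq q^{-m(n+\al)}$, and inserting the kernel $\Vert x-y\Vert_{K^n}^{-(n+\al)}$ yields
\[
\int_{B_i}|f(x)-\bar f_i|^2\,dx\leq\frac{q^{-m\al}}{2}\int_{B_i}\int_{B_i}\frac{|f(x)-f(y)|^2}{\Vert x-y\Vert_{K^n}^{n+\al}}\,dx\,dy .
\]
Summing over $i$ and using $\bigcup_i(B_i\times B_i)\subseteq\OO\times\OO$ together with positivity of the integrand, one obtains
\[
\sup_{f\in F}\|f-P_m f\|_{L^2(\OO)}^2\leq\frac{q^{-m\al}}{2}\,\sup_{f\in F}\int_{\OO}\int_{\OO}\frac{|f(x)-f(y)|^2}{\Vert x-y\Vert_{K^n}^{n+\al}}\,dx\,dy\longrightarrow 0\quad\text{as }m\to\infty,
\]
where the limit uses precisely the uniform bound \eqref{3-5}.

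Finally I would assemble these ingredients by the standard total-boundedness argument: given $\vep>0$, fix $m$ so large that $\sup_{f\in F}\|f-P_m f\|_{L^2(\OO)}<\vep/2$; since $P_m(F)$ is precompact it has a finite $\vep/2$-net, and the same points form a finite $\vep$-net for $F$. Thus $F$ is totally bounded, hence precompact in $L^2(\OO)$. The only genuinely delicate point is the \emph{uniform} smallness of the approximation error, and the ultrametric estimate above handles it directly: averaging over clopen balls of radius $q^{-m}$ respects the ball decomposition and the kernel is bounded below on each such ball, a simplification unavailable in the Archimedean case, where one must instead control translations $\|\tau_z f-f\|_{L^2}$ and contend with boundary effects.
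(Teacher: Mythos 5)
Your proof is correct and follows essentially the same route as the paper's: a partition of $B_N$ into small balls, the averaging (conditional expectation) operator onto ball-wise constants, the ultrametric lower bound on the kernel to control $\|f-P_mf\|_{L^2(\OO)}$ uniformly over $F$, and total boundedness via the finite-dimensional range of $P_m$. The only cosmetic differences are that you use the variance identity where the paper uses Cauchy--Schwarz, and you invoke precompactness of bounded sets in the finite-dimensional space $V_m$ directly, where the paper passes explicitly through coordinates in $\RR^\ell$ to build the $\vep$-net.
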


\begin{proof} (Compare with \cite{DPV}). We will show that $F$ is totally bounded in $L^2(\OO)$, that is for any $\vep\in (0,1)$ there exist such $\be_1,\ldots, \be_M \in L^2 (\OO)$ that for every function $f\in F$
\begin{equation}\Label{3-6}\
\| f-\be_j\|_{L^2(\OO)}\leq \vep \quad \text{for some} \ j\in\{1,\ldots,M\}.
\end{equation}
Let us consider a finite covering of $\OO$ by non-intersecting small balls $Q_1,\ldots,Q_\ell$ of a radius $\ga = q^{-\nu}$:
\[\OO = \bigsqcup\limits_{j=1}^\ell Q_j, \quad \mes Q_j = q^{-\nu n} = \ga^n \ \text{(see \eqref{2-1}).}
\]
\begin{align}\Label{3-6a}\ \nonumber
&\text{For each}\  x\in\OO\  \text{denote by}\  j(x)\ \text{the unique choice of} \\
&\text{a number from the set}\  \{1,\ldots,\ell\}\ \text{ such that}\  x\in Q_{j(x)}.
\end{align}

 Let
\[ P(f)(x)=\frac{1}{\mes Q_{j(x)}}\int\limits_{Q_{j(x)}} f(y)\, dy.
\]
On each ball $Q_j$, $P(f)(x)$ is constant and will be denoted $r_j(f)$. Note that $P(f+g)=P(f)+P(g)$ for all $f,g \in F$. Denote also
\[R(f) = \ga^{n/2}(r_1(f),\ldots, r_\ell(f)) \in \RR^\ell.
\]
We have $R(f+g) = R(f) +R(g)$,

\[\| P(f)\|^2_{L^2(\OO)} = \sum\limits_{j=1}^\ell\, \int\limits_{Q_j} |P(f)|^2 dx\leq \ga^n \sum\limits_{j=1}^\ell |r_j(f)|^2 = |R(f)|^2 \leq\frac{|R(f)|^2}{\ga^n},
\]
because $\ga < 1$. Next,
\begin{align*}
|R(f)|^2 &=\sum\limits_{j=1}^{\ell} \ga^n |r_j(f)|^2 = \frac{1}{\ga^n} \sum\limits_{j=1}^{\ell} \Big|\int\limits_{Q_j} f(y)\,dy\Big|^2 \leq \sum\limits_{j=1}^{\ell} \int\limits_{Q_j} |f(y)|^2 dy=\\
&=\int\limits_Q |f(y)|^2dy = \| f\|^2_{L^2(\OO)},
\end{align*}
so that the set $R(F)$ is bounded in $\RR^\ell$, thus totally bounded. Therefore for any $\eta >0$ there exist such $b_1,\ldots,b_M\in\RR^\ell$, that
\[R(F)\subseteq\bigcup_{i=1}^M B_\eta (b_i),
\]
where $B_\eta (b)$ is a ball of the radius $\eta$ centered at $b$. For any $i\in\{1,\ldots,M\}$ we write the coordinated of the point $b_i$ as $b_i = (b_{i,1},\ldots,b_{i,\ell})\in\RR^\ell$. For any $x\in\OO$ we set
\[\be_i(x)=\ga^{-n/2}b_{i,j(x)},\]
where $j(x)$ was defined in \eqref{3-6a}. If $x\in Q_j$, then
\[P\big(\be_j(x)\big)=\ga^{-n/2}b_{i,j(x)}=\be_j(x),\]
so that $r_j(\be_i)=\ga^{-n/2} b_{i,j}$, and
\begin{equation}\Label{3-7}\
R(\be_i)=b_i.
\end{equation}
On the other hand
\begin{align*}
\| f- P(f)\|^2_{L^2(\OO)}&=\sum\limits_{j=1}^\ell \,\int\limits_{Q_j} | f(x) - P(f)(x)|^2 dx=\\
&=\sum\limits_{j=1}^\ell \,\int\limits_{Q_j} \Big| f(x) -\frac{1}{\mes Q_j}\int\limits_{Q_j} f(y)\, dy\Big|^2 dx=\\
&=\sum\limits_{j=1}^\ell \,\int\limits_{Q_j} \frac{1}{(\mes Q_j)^2}\, \Big| \int\limits_{Q_j}\big[f(x) -f(y)\big]\, dy\Big|^2 dx\leq\\
&\leq \frac{1}{\ga^{n}}\sum\limits_{j=1}^\ell \,\int\limits_{Q_j} \, \int\limits_{Q_j}|f(x) -f(y)|^2\, dy\, dx.
\end{align*}
Since every point of an ultrametric ball is its center, we find that for $x,y \in Q_j$ we have $\| x-y\|_{K^n}\leq \ga$, so that
\begin{align}\Label{3-8}\ \nonumber
\ga^{-n-\al} &\leq \frac{1}{\| x-y\|_{K^n}^{n+\al}}\ \text{and}\\ \nonumber
\| f - P(f)\|^2_{L^2(\OO)}&\leq \ga^\al  \sum\limits_{j=1}^\ell \,\int\limits_{Q_j} \,\int\limits_{Q_j}\frac{|f(x)-f(y)|^2}{\| x-y\|_{K^n}^{n+\al}}\, dy\,dx\leq\\
&\leq\ga^\al\int\limits_{Q} \,\int\limits_{Q}\frac{|f(x)-f(y)|^2}{\| x-y\|_{K^n}^{n+\al}}\, dy\,dx.
\end{align}
Next,
\begin{equation}\Label{3-9}\
\| f- \be_j\|_{L^2 (\OO)}\leq \| f - P(f)\|_{L^2 (\OO)} + \| P(\be_j) - \be_j\|_{L^2 (\OO)} + \| P(f-\be_j)\|_{L^2. (\OO)}
\end{equation}
It follows from \eqref{3-5} and \eqref{3-8} that choosing $\ga$ sufficiently small we find that the first summand in \eqref{3-9} does not exceed ${\vep}/{2}$ for all $f\in F$. By \eqref{3-7}, the second summand equals zero. The third one equals
\[\frac{|R(f)-R(\be_j)|}{\ga^{n/2}},\]
and we can find such $j\in\{1,\ldots,M\}$ that $R(f)\in B_\eta(b_j)$. Here also $R(\be_j)=b_j.$ Taking $\eta$ sufficiently small, we obtain that the third summand in \eqref{3-9} does not exceed $\vep/2$, which proves \eqref{3-6}.
\end{proof}

\bigskip
See \cite{T,KKZ,G,AKN} for various other approaches to non-Archimedean Sobolev spaces.

\section{Integration by Parts}
\Label{sec4}

Let us give several useful formulas explaining why the operation \eqref{1-2} can be seen as a analogue of the classical normal derivation. For further details see \cite{DRV, FK}.

\begin{itemize}
\item[1)]\ Suppose that $\dfrac{u(x) - u(y)}{\|x-y\|_{K^n}^{n+\al}}\in L^{1}(B_N\times B_N)$, (for example, if $u\in\cD (K^n)$). Then
\begin{equation}\Label{4-1}\
c_{n,\al}\int\limits_{B_N} \int\limits_{B_N}\dfrac{u(x) - u(y)}{\|x-y\|_{K^n}^{n+\al}}\, dx \, dy = c_{n,\al}\int\limits_{B_N} \int\limits_{B_N}\dfrac{u(y) - u(x)}{\|x-y\|_{K^n}^{n+\al}}\, dy \, dx =0
\end{equation}
for the symmetry reasons.
\item[2)]\ Let $u$ be a bounded locally  constant function on $K^n$. Then
\begin{equation}\Label{4-2}\
\int\limits_{\OO} \big(D^{\al,n}u\big) (x)\, dx = - \int\limits_{K^n\backslash \OO} \big(\cN_\al u\big)(y)\, dy.
\end{equation}
This follows from \eqref{4-1}, see \cite{DRV}.
\item[3)]\ Let $u$ and $v$ be bounded locally constant functions on $K^n$. Then
\begin{align}\Label{4-3}\ \nonumber
&c_{n,\al}\!\!\!\!\iKO \frac{\big(u(x)-u(y)\big)\big(v(x)-v(y)\big)}{\Vert x-y\Vert_{K^n}^{n+\al}}\,dx\,dy=\\
&=\int\limits_{\OO} v(x) \big(D^{\al,n}u\big)(x)\, dx + \int\limits_{\OO^c} v(y) \big(\cN_\al u\big)(y)\, dy.
\end{align}
This follows from \eqref{1-1}, \eqref{1-2} and \eqref{4-1}.
\end{itemize}

\section{Weak Solutions}
\Label{sec5}

Let $f\in L^2(\OO)$, $g\in L^1 (K^n\bs \OO)$. A function $u\in \Hao$ is called {\it a weak solution} of the problem \eqref{1-3}, if
\[\frac{c_{n,\al}}{2}\!\!\iKO \frac{\big(u(x)-u(y)\big)\big(v(x)-v(y)\big)}{\Vert x-y\Vert_{K^n}^{n+\al}}\,dx\,dy= \int\limits_{\OO} f(x) v(x)\, dx +\int\limits_{K^n\bs \OO} g(y) v(y)\, dy
\]
for each $v\in \Hag$.

Let us show following \cite{DRV} that the above notion agrees with a variational setting.

\begin{proposition}\Label{prop2}\
Under the above assumptions regarding $f,g$ define a functional ${\bf J}\colon \Hag \to \RR$ by the formula
\[\bJ [u]=\frac{c_{n,\al}}{4}\!\!\iKO \frac{|u(x)-u(y)|^2}{\Vert x-y\Vert_{K^n}^{n+\al}}\,dx\,dy\,- \int\limits_{\OO} f(x) u(x)\, dx\, -\int\limits_{K^n\bs \OO} g(y) u(y)\, dy.
\]
Then every critical point of $\bJ$, that is such a function $u\in\Hag$ that
\[\lim\limits_{\vep\to 0} \frac{\bJ [u+\vep v]- \bJ[u]}{\vep}=0, \quad \text{\rm for all}\ v\in\Hag,\]
is a weak solution of the problem \eqref{1-3}.
\end{proposition}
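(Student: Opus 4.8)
The plan is to compute the Gâteaux derivative of $\bJ$ at the alleged critical point $u$ directly from the definition, and then to observe that its vanishing is literally the weak formulation. Since $\Hag$ is a linear space (it is a Hilbert space under a norm of the type \eqref{3-1}, exactly as in Proposition~\ref{prop1}), for any $u,v\in\Hag$ and any $\vep\in\RR$ the combination $u+\vep v$ again belongs to $\Hag$, so $\bJ[u+\vep v]$ is defined and the difference quotient makes sense.

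First I would expand the quadratic part. Writing $a(x,y)=u(x)-u(y)$ and $b(x,y)=v(x)-v(y)$, the numerator of the integrand becomes
\[
|a+\vep b|^2=|a|^2+2\vep\,ab+\vep^2|b|^2,
\]
while the two linear terms split additively in $u+\vep v$. Subtracting $\bJ[u]$, the $|a|^2$ contribution and the $fu$, $gu$ contributions cancel, and dividing by $\vep$ yields
\begin{align*}
\frac{\bJ[u+\vep v]-\bJ[u]}{\vep}
&=\frac{c_{n,\al}}{2}\iKO\frac{a(x,y)\,b(x,y)}{\Vert x-y\Vert_{K^n}^{n+\al}}\,dx\,dy
-\int\limits_\OO fv\,dx-\int\limits_{K^n\bs\OO}gv\,dy\\
&\quad+\frac{c_{n,\al}\,\vep}{4}\iKO\frac{|b(x,y)|^2}{\Vert x-y\Vert_{K^n}^{n+\al}}\,dx\,dy.
\end{align*}
Note that the factor $c_{n,\al}/4$ in $\bJ$ produces exactly the factor $c_{n,\al}/2$ appearing in the definition of a weak solution, which is the reason for that particular normalization.

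It then remains to pass to the limit $\vep\to0$. The coefficient of $\vep$ in the last term is finite, since it is dominated by $\Vert v\Vert_\Har^2<\infty$, so this term vanishes; the cross integral is absolutely convergent by the Cauchy--Schwarz inequality with respect to the measure $\Vert x-y\Vert_{K^n}^{-n-\al}\,dx\,dy$ on $\KO$, using the finiteness of the seminorms of $u$ and $v$; and the linear functionals are finite under the standing assumptions on $f,g$. Hence
\begin{align*}
\lim\limits_{\vep\to0}\frac{\bJ[u+\vep v]-\bJ[u]}{\vep}
&=\frac{c_{n,\al}}{2}\iKO\frac{\big(u(x)-u(y)\big)\big(v(x)-v(y)\big)}{\Vert x-y\Vert_{K^n}^{n+\al}}\,dx\,dy\\
&\quad-\int\limits_\OO fv\,dx-\int\limits_{K^n\bs\OO}gv\,dy,
\end{align*}
and equating this to $0$ for every $v\in\Hag$ is precisely the defining identity of a weak solution. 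The computation is routine; the only point needing care is the well-definedness of the difference quotient, i.e.\ the finiteness of the cross term and of the two linear integrals, so that the derivative exists and equals the displayed expression. This is where $u,v\in\Hag$ (for the seminorm and the $L^2(\OO)$ bound) and the hypotheses $f\in L^2(\OO)$, $g\in L^1(K^n\bs\OO)$ together with the weighted integrability built into $\Hag$ are genuinely used; no deeper analytic obstacle is present.
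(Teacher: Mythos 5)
Your proposal is correct and follows essentially the same route as the paper: expand the quadratic term in $u+\vep v$, cancel $\bJ[u]$, divide by $\vep$, and let $\vep\to 0$, with the cross term giving exactly the weak formulation. Your added justification of finiteness via Cauchy--Schwarz is the content the paper compresses into the remark that $\abs{\bJ[u]}\leq C\Vert u\Vert_{\Hag}<\infty$, so there is no substantive difference.
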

\begin{proof} It is easy to show that
\[\big| \bJ [u]\big| \leq C \| u\|_{\Hag} < \infty,\]
so that the functional $\bJ$ is well defined on $\Hag$. Next,
\begin{align*}
\bJ[u+\vep v]&= \bJ[u] + \vep \Bigg( \frac{c_{n,\al}}{2}\iKO\frac{\big(u(x)-u(y)\big)\big(v(x)-v(y)\big)}{\Vert x-y\Vert_{K^n}^{n+\al}}\,dx\,dy-\\
&-\int\limits_\OO f(x)v(x)\, dx - \int\limits_{\OO^c}g(y) v(y)\, dy\Bigg) + \frac{c_{n,\al}}{4}\vep^2\iKO \frac{|v(x)-v(y)|^2}{\Vert x-y\Vert_{K^n}^{n+\al}}\,dx\,dy.
\end{align*}
So that
\begin{align*}
&\lim\limits_{\vep\to 0} \frac{\bJ [u+\vep v]- \bJ[u]}{\vep} =\\
&= \frac{c_{n,\al}}{2}\iKO\frac{\big(u(x)-u(y)\big)\big(v(x)-v(y)\big)}{\Vert x-y\Vert_{K^n}^{n+\al}}\,dx\,dy
-\int\limits_\OO f(x)v(x)\, dx - \int\limits_{\OO^c}g(y) v(y)\, dy,
\end{align*}
as desired.
\end{proof}

The next result provide a kind of maximum principle for the problem \eqref{1-3}.
\begin{proposition}\Label{prop3}\
Let $u\in \Hag$ satisfy in weak sense the problem \eqref{1-3} with $f\in L^2(\OO)$, $f\geq 0$, $g\in L^1(\OO)$, $g\geq 0$. Then $u=\text{\rm const}$ everywhere except on a set of measure zero.
\end{proposition}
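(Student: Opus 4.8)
The plan is to drive everything from the weak formulation by inserting two well-chosen test functions. First I would test against the constant function $v\equiv 1$. This is admissible: since $\OO=B_N$ is bounded we have $\|1\|_{L^2(\OO)}^2=\mes\,\OO<\infty$, the weight term $\int_{\OO^c}|g|\cdot 1\,dy$ is finite because $g\in L^1(\OO^c)$, and the Gagliardo-type double integral vanishes identically since $1-1=0$; hence $v\equiv 1\in\Hag$. Plugging $v\equiv 1$ into the weak formulation kills the left-hand side (every factor $v(x)-v(y)$ vanishes), yielding the compatibility identity
\[
\int\limits_{\OO} f(x)\,dx+\int\limits_{\OO^c} g(y)\,dy=0 .
\]
Because $f\geq 0$ and $g\geq 0$, both integrals are nonnegative and sum to zero, so each must vanish; consequently $f=0$ almost everywhere on $\OO$ and $g=0$ almost everywhere on $\OO^c$. (Note that this reproduces the integration-by-parts relation \eqref{4-2} directly from the variational identity, without assuming $u$ locally constant.)

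Next I would test against $v=u$ itself, which is admissible by the standing hypothesis $u\in\Hag$. The right-hand side $\int_{\OO} fu\,dx+\int_{\OO^c} gu\,dy$ now vanishes since $f$ and $g$ vanish a.e., leaving
\[
\frac{c_{n,\al}}{2}\iKO \frac{|u(x)-u(y)|^2}{\Vert x-y\Vert_{K^n}^{n+\al}}\,dx\,dy=0 .
\]
As $c_{n,\al}>0$ for $\al>0$ (both numerator and denominator of $c_{n,\al}$ are positive) and the integrand is nonnegative, the integral can be zero only if $u(x)=u(y)$ for almost every pair $(x,y)$ in the domain $\KO$.

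Finally I would promote this almost-everywhere pairwise equality to almost-everywhere constancy. Restricting to $\OO\times\OO\subset\KO$ and applying Fubini's theorem, for almost every $x\in\OO$ one has $u(x)=u(y)$ for almost every $y\in\OO$; since the common value must then equal the mean of $u$ over $\OO$, this forces $u\equiv c$ a.e.\ on $\OO$ for a single constant $c$. To reach $\OO^c$ I would use the slab $\OO\times\OO^c$, which also lies in $\KO$: for almost every $x\in\OO$ (where $u(x)=c$) the relation $u(x)=u(y)$ holds for almost every $y\in\OO^c$, giving $u\equiv c$ a.e.\ on $\OO^c$ as well, hence $u=c$ off a null set of $K^n$. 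The one point demanding genuine care is this last measure-theoretic passage from ``a.e.\ pairwise equal'' to ``a.e.\ constant''; I would handle the null-set bookkeeping with Fubini's theorem together with the completeness of the extended Haar measure recorded in Section~\ref{sec2}, exactly in the spirit of the completeness argument given earlier for the space $\Har$.
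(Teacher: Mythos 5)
Your proposal is correct and follows essentially the same route as the paper: test with $v\equiv 1$ to force $f=0$ and $g=0$ a.e., then test with $v=u$ to annihilate the Gagliardo seminorm, and finish with a Fubini argument turning the a.e.\ pairwise equality into a.e.\ constancy. If anything, your final step is slightly more complete than the paper's, since you explicitly use the slab $\OO\times\OO^c$ to propagate the constant to $\OO^c$, whereas the paper's argument via the sections $N_x\subset\OO$ only spells out constancy on $\OO$.
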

\begin{proof} The function $v\equiv 1$ belongs to $\Hag$, so that t can be used as a test function in the definition of a weak solution. Therefore
\[0\leq \int\limits_\OO f(x)\, dx = - \int\limits_{K^n \bs\OO}g(y)\,dy\leq 0.\]
This implies the equalities $f(x)=0$ almost everywhere on $\OO$, $g(y)=0$ almost everywhere on $K^n\bs\OO$.

Next, let us take $v=u$ as a test function. We obtain that
\[\iKO\frac{|u(x)-u(y)|^2}{\Vert x-y\Vert_{K^n}^{n+\al}}\,dx\,dy=0,\]
so that $u(x)-u(y)=0$ for $(x,y) \not\in M$, $\mes M = 0$.

For $x\in\OO$, consider the set $N_x=\{y\in\OO\colon (x,y)\in M\}$. It is known that $\mes N_x=0$ for almost all $x$ \cite[\S 36, Thm. A]{Ha}.

For $y\not\in N_x$, we have $u(x)=u(y)$, that is $u(y)= \text{const}$ for $y \not\in N_x$.
\end{proof}

\section{Existence and uniqueness}\Label{sec6}

The following theorem is an analog of the main result from \cite{DRV}. As above, we consider the case where $\OO = B_N$.

\begin{theorem}\Label{T1}\
Suppose that $f\in L^2(\OO)$, $g\in L^1(K^n\bs\OO)$, and there exists such a bounded locally constant function $\psi$ that $\cN_\al\psi = g$ on $K^n\bs \OO$. Then the problem \eqref{1-3} has a weak solution in $\Hao$, if and only if
\begin{equation}\Label{6-1}\
\int\limits_{\OO} f(x)\, dx = - \int\limits_{K^n\bs \OO} g(y)\, dy.
\end{equation}
If \eqref{6-1} is satisfied, then the solution is unique, up to an additive constant.
\end{theorem}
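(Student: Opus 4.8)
The plan is to prove necessity of \eqref{6-1} by a single test function, to prove sufficiency by minimizing $\bJ$ via the direct method on the subspace of functions with vanishing mean over $\OO$, and to obtain uniqueness by testing the difference of two solutions against itself. For necessity I would insert $v\equiv 1\in\Hag$ into the definition of a weak solution: since $v(x)-v(y)=0$ the bilinear left-hand side vanishes identically and what remains is exactly $\int_\OO f\,dx+\int_{K^n\bs\OO}g\,dy=0$. The same computation also explains the role of \eqref{6-1}: it is precisely the condition under which $\bJ[u+\const]=\bJ[u]$, so one can only expect coercivity modulo constants, never on all of $\Hag$.

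For sufficiency I would minimize $\bJ$ over the closed subspace $\mathbb H=\{v\in\Hag:\int_\OO v\,dx=0\}$, abbreviating $Q[v]=\iKO |v(x)-v(y)|^2\,\|x-y\|_{K^n}^{-(n+\al)}\,dx\,dy$. The hard part is coercivity, and in particular the control of $v$ on $\OO^c$; this is where the hypothesis on $\psi$ becomes decisive. For $y\in\OO^c$ with $\|y\|_{K^n}=q^M$ and any $x\in\OO=B_N$ the ultrametric inequality gives $\|x-y\|_{K^n}=\|y\|_{K^n}$, so $(\cN_\al\psi)(y)=c_{n,\al}\,\|y\|_{K^n}^{-(n+\al)}\int_\OO(\psi(y)-\psi(x))\,dx$, and boundedness of $\psi$ forces the decay $|g(y)|\le C\,\|y\|_{K^n}^{-(n+\al)}$. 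The very same identity, together with $\int_\OO v=0$, yields $Q[v]\ge\const\cdot\int_{\OO^c}\|y\|_{K^n}^{-(n+\al)}|v(y)|^2\,dy$, which combined with the decay of $g$ controls the weighted term $\int_{\OO^c}|g|\,|v|^2$; a Poincaré inequality $\|v\|_{L^2(\OO)}^2\le C\,Q[v]$ on $\mathbb H$ I would extract from the compactness Proposition~\ref{prop1} by the standard contradiction argument. Combining the three estimates gives $Q[v]\ge\delta\,\|v\|_{\Hag}^2$ on $\mathbb H$, whence $\bJ[v]\ge\tfrac{c_{n,\al}}{4}Q[v]-C\|v\|_{\Hag}\to+\infty$.

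Since $\bJ$ is convex and continuous, hence weakly lower semicontinuous, coercivity on the Hilbert space $\mathbb H$ produces a minimizer $u_0$ by the direct method. To check that $u_0$ is a critical point in every direction $v\in\Hag$, as Proposition~\ref{prop2} requires, I would split $v=v_0+\const$ with $v_0\in\mathbb H$: minimality annihilates the derivative in the direction $v_0$, while in the constant direction the bilinear term vanishes and what survives is $-\const\cdot(\int_\OO f+\int_{\OO^c}g)=0$ by \eqref{6-1}. Hence $u_0\in\mathbb H\subset\Hag\subset\Hao$ is a weak solution.

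For uniqueness, set $w=u_1-u_2$. The decay of $g$ gives $\int_{\OO^c}|g|\,|w|^2\le C\int_{\OO^c}\|y\|_{K^n}^{-(n+\al)}|w|^2<\infty$, the last integral being finite because the $\Hao$-norm bounds $\int_{\OO^c}\|y\|_{K^n}^{-(n+\al)}|w-\bar w_\OO|^2$ via the $\OO\times\OO^c$ part of $Q$, while $\int_{\OO^c}\|y\|_{K^n}^{-(n+\al)}\,dy<\infty$ (cf. \eqref{2-2}). Thus $w\in\Hag$ is an admissible test function; subtracting the two weak formulations and taking $v=w$ forces $Q[w]=0$, so $w=\const$ almost everywhere, which is uniqueness up to an additive constant. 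The single genuine obstacle throughout is the coercivity in the second paragraph, and the key insight is that solvability of $\cN_\al\psi=g$ by a bounded $\psi$ is exactly what encodes the decay $|g(y)|\lesssim\|y\|_{K^n}^{-(n+\al)}$ needed to tame the exterior term.
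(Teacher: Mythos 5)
Your proof is correct, but it takes a genuinely different route from the paper's. The paper perturbs the bilinear form by $\int_\OO v\vph\,dx$ to make it coercive on all of $\Hao$, solves the perturbed problem by the Riesz theorem, packages the solution operator into a compact self-adjoint $T$ on $L^2(\OO)$ (compactness via Proposition 3.2), identifies $\mathrm{Ker}\,(I-T)$ with the constants, and reads off the solvability criterion from the Fredholm alternative; the general $g$ is then handled by the substitution $\bar u=u-\psi$ together with the integration-by-parts identity \eqref{4-2}. You instead minimize $\bJ$ directly on the mean-zero subspace, and the interesting divergence is how the hypothesis on $\psi$ is used: not to reduce to $g\equiv 0$, but to extract the pointwise decay $|g(y)|\le C\|y\|_{K^n}^{-(n+\al)}$ from the ultrametric identity $\|x-y\|_{K^n}=\|y\|_{K^n}$ for $x\in B_N$, $y\in K^n\bs B_N$, which together with the mean-zero identity $\int_\OO|v(x)-v(y)|^2dx=\|v\|^2_{L^2(\OO)}+q^{nN}|v(y)|^2$ is exactly what controls the weighted exterior term and yields coercivity of $Q$ with respect to the full $\Hag$-norm. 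Both arguments lean on Proposition 3.2 (you for the Poincar\'e inequality by contradiction, the paper for compactness of $T$), and both ultimately obtain necessity from the test function $v\equiv 1$. What the paper's route buys is the operator-theoretic structure \eqref{6-3a} that is reused in Section 7; what yours buys is a shorter, self-contained existence proof avoiding the reduction step (and the implicit need for $D^{\al,n}\psi\in L^2(\OO)$), plus a direct energy proof of uniqueness that does not pass through Proposition~\ref{prop3}. The only points you leave implicit --- that the mean-zero subspace is closed in $\Hag$, and that Proposition 3.2 applies to the zero-extensions of the normalized sequence in the Poincar\'e argument --- are routine and easily supplied.
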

\begin{proof} First we consider the case where $g\equiv 0$, and $f\not\equiv 0$. For a given $h\in L^2(\OO)$, we look for a solution $v\in\Hao$ satisfying the equation
\begin{align}\Label{6-2}\
\int\limits_\OO v(x) \vph(x)\, dx +\iKO\frac{(v(x)-v(y))(\vph(x)-\vph(y))}{\Vert x-y\Vert_{K^n}^{n+\al}}\,dx\,dy=\int\limits_{\OO} h(x) \vph(x)\, dx
\end{align}
for all $\vph\in \Hao$, with the homogeneous Neumann condition
\[\big(\cN_\al u\big)(y)=0\quad \text{for all}\ y\in K^n\bs\OO.\]
Let us define the linear functional $S\colon \Hao \to\RR$
\[S(\vph)=\int\limits_\OO h(x)\vph(x)\,dx, \quad \vph\in\Hao.\]
This functional is continuous, since
\[| S(\vph) |\leq \| h\|_{L^2(\OO)}\| \vph \|_{L^2(\OO)}\leq \| h\|_{L^2(\OO)}\| \vph \|_{\Hao},\]
and by the Riesz theorem, the problem \eqref{6-2} with a given $h$ has a unique solution $v\in \Hao$.
Setting $\vph = v$ in \eqref{6-2} we obtain
\begin{equation}\Label{6-3}\
\| v\|_{\Hag}\leq C\| h\|_{L^2(\OO)},
\end{equation}
since $\OO \times \OO \subset \KO$.

Define the operator $T_0\colon L^2(\OO) \to \Hao$ as $T_0h=v$. Let also $T$ denotes the restriction of $T_0$ onto $\OO$:
\[Th = T_0h\Big|_\OO.\]
Then $T$ is a linear operator on $L^2(\OO).$

Repeating the reasoning from \cite{DRV} and using our Proposition 3.2, we show that $T$ is a compact selfadjoint operator on $L^2(\OO)$.

Let us prove that $\text{Ker}\, (I - T)$ coincides with the set of all functions on $K^n$ constant everywhere except a set of measure zero. Indeed, such a function $c$ satisfies the equation $D^{\al,n} c+c=c$, and the relation $\cN_\al c = 0$. Conversely, let $v\in L^2(\OO)$ belong to $\text{Ker}\, (I - T)$. Consider $T_0v \in\Hao$. By construction,
\begin{equation}\Label{6-3a}\
D^{\al,n}\big(T_0v\big)+T_0v=v\quad \text{on}\ \OO.
\end{equation}
Since also $\cN_\al (T_0 v)=0$ on $K^n\bs\OO$ and for $v\in\text{Ker}\,(I - T)$: $v=Tv=T_0v$ on $\OO$, it follows from \eqref{6-3a} that $D^{\al,n} (T_0v)=0$ on $\OO$. Therefore, Proposition~\ref{prop3} implies that $T_0v$ is a constant on $\OO$, and this is the function $v$.

By the Fredholm alternative, the image $\text{Im} \,(I -T)$ coincides with the orthogonal complement in $L^2(\OO)$ to $\text{Ker}\,(I - T)$. Therefore,
\begin{equation}\Label{6-4}\
\text{Im}\, (I - T) = \{f \in L^2(\OO)\colon \int\limits_{\OO} f(x)\, dx = 0\}.
\end{equation}
Consider $f$ such that $\int\limits_{\OO} f(x)\, dx = 0$. Then, by \eqref{6-4}, there exists $w\in L^2(\OO)$ such that $f=w-Tw$. Define $u\colon=T_0w$. By the definition of $T_0$, $\cN_\al (u)=0$ on $K^n\bs \OO$, and also
\[D^{\al,n}(T_0w)+(T_0w)=w, \quad\text{on}\  \OO.\]
Therefore, on $\OO$,
\[f=w-Tw = w-T_0w = D^{\al,n}(T_0w)=D^{\al,n}(u),\]
and we have obtained the desired solution.

Conversely, if we have a solution $u\in\Hao$ of the problem \eqref{1-3} with $g\equiv 0$, set $w=f+u$. Then we find that
\[D^{\al,n}u+u=f+u=w\quad \text{on}\ \OO.\]
Then $u=T_0w$ on $K^n$, hence $u=Tw$ on $\OO$, so that
\[(I -T)w = w-u = f \quad\text{on}\ \OO\]
and we have that $f\in\text{Im}\, (I - T)$. By \eqref{6-4}, $\int\limits_{\OO}f(x)\,dx=0$, which proves the theorem in  the case $g\equiv 0$.

Let us consider the case, where $g$ is not necessary zero. By our assumptions, there exists a bounded locally constant function $\psi$ such that $\cN_\al \psi = g$ on $K^n\bs \OO$. Let $\bar{u}=u-\psi$. Then $\bar{u}$ is the solution of the Neumann problem

 \begin{equation}\nonumber
	\begin{cases}
		D^{\al,n}\bar{u}=\bar{f} &\text{on $\OO$}\\
		\cN_a\ \bar{u} =0 &\text{on $\OO^c$}
	\end{cases}
\end{equation}
where $\bar{f}=f-D^{\al,n}f$.

As we have proved, this problem has a solution, if and only if $\int\limits_{\OO}\bar{f}\, dx = 0$, that is if and only if
\[0=\int\limits_{\OO}\bar{f}\, dx =\int\limits_{\OO}{f}\, dx -\int\limits_{\OO} \big(D^{\al,n}\psi\big)(x)\, dx.\]
By \eqref{4-2}
\[
\int\limits_{\OO} \big(D^{\al,n}\psi\big) (x)\, dx = - \int\limits_{K^n\backslash \OO} \big(\cN_\al \psi\big)(y)\, dy= - \int\limits_{K^n\backslash \OO} g(y)\, dy.
\]
So that the solvability of our problem is equivalent to the equality \eqref{6-1}.

The uniqueness of a solution is a consequence of Proposition \ref{prop3}.
\end{proof}

\section{Strong Solutions}\Label{sec7}\

Let us consider some cases where the Neumann problem \eqref{1-3} has strong (``classical'') solutions. We begin with the one-dimensional case (n=1) homogeneous problem, that is the case where $g=0$, For brevity, we drop $n=1$ in notations. Thus, our problem now is

	\begin{align}\Label{7-1}\
		D^{\al}u(x)=f(x), &\quad \text{if $| x|_K\leq q^N$}\\
		\Label{7-2}\
		\cN_\al\ u(y) =g(y), &\quad \text{if $| y|_K> q^N$}.
	\end{align}
We are interested in solutions satisfying the problem in a straightforward way, that is belonging to the domain of operators and such that they can be substituted to the equation \eqref{7-1} and the condition \eqref{7-2}.

\vspace{3mm}

{\bf 7.1} Denote by $\DAN$ the regional operator studied (for $n=1$) in \cite{K2018}:

\[\big(\DAN \big)(x)=c_{1,\al} \int\limits_\OO \frac{u(x)-u(y)}{|x-y|^{1+\al}_K}\, dy, \quad x\in\OO = B_N.\]
The above expression makes sense, for example, on locally constant functions. If the operator $D^{\al,n}=D^{\al,1}$ defined in \eqref{1-1} is restricted to functions $u_N$ supported on $B_N$, and the resulting function $D^{\al,1}u_N$ is considered only on $B_N$, we obtain the operator
\[D_N^{\al,1}u_N = \DAN u_N +\la_N u_N,\]
where
\[\la_N = \frac{q-1}{q(1-q^{-\al-1})}q^{-\al N}.\]

On the other hand, by the ultrametric property, for $x\in\OO$
\begin{align*}
\big(D^{\al.1} u\big)(x)&=c_{1,\al}\int\limits_{|y|_K\leq q^N}\frac{u(x)-u(y)}{|x-y|_K^{\al+1}}\, dy +c_{1,\al}\int\limits_{|y|_K> q^N}\frac{u(x)-u(y)}{|x-y|_K^{\al+1}}\, dy=\\
&=\big(\DAN u\big)(x) + c_{1,\al}u(x)\int\limits_{|y|_K> q^N}\frac{1}{|y|_K^{\al+1}}\, dy - c_{1,\al}\int\limits_{|y|_K> q^N}\frac{u(y)}{|y|_K^{\al+1}}\, dy,
\end{align*}
for an arbitrary function $u$, measurable and bounded on $K$ and locally constant on $\OO$.

It follows from \eqref{2-2} that
\begin{equation}\Label{7-3}\
c_{1,\al}\int\limits_{|y|_K> q^N}\frac{1}{|y|_K^{\al+1}}\, dy=\la_N.
\end{equation}

Note that $\la_N$ is the smallest eigenvalue of the operator $D^{\al,1}_N$ in $L^2(\OO)$.

By \eqref{7-3}
\begin{equation}\Label{7-4}\
\big(D^{\al,1} u\big)(x) = \big(\DAN u\big) (x) + \la_N u(x) - c_{1,\al} \int\limits_{|y|_K> q^N}\frac{u(y)}{|y|_K^{\al+1}}\, dy, \quad x\in \OO.
\end{equation}

Similarly, for $x\in\OO^c$, that is for $| x|_K>q^N$,
\[\big(\cN_\al u\big) (x) = c_{1,\al}\int\limits_{|y|_K\leq q^N}\frac{u(x)-u(y)}{|x|_K^{\al+1}}\, dy,
\]
so that
\[\big(\cN_\al u\big) (x) =c_{1,\al}q^N\frac{u(x)}{|x|_K^{\al + 1}} -  c_{1,\al}\frac{1}{|x|_K^{\al+1}}\int\limits_{\OO}u(y)\, dy,
\]
and the condition \eqref{7-2} equivalent to the relation
\begin{equation}\Label{7-5}\
u(y)= q^{-N} \int\limits_{\OO} u(z)\, dz, \quad y\in\OO^c,
\end{equation}
so that a function $u$ satisfying the Neumann condition \eqref{7-2} equals a constant on $\OO^c$ and this constant $h$ coincides with the average \eqref{7-5} of $u$ over $\OO$. Substituting this into \eqref{7-3} we find from \eqref{7-1} that
\begin{equation}\Label{7-6}\
\big(\DAN u\big)(x) + \la_N u(x) - \la_N h = f(x), \quad x\in\OO.
\end{equation}

\medskip
To study the equation \eqref{7-6}, we use the $L^1$-theory of the operator $D^{\al,1}$ developed in \cite{KK, K2018} (see also \cite{AKK}). From now on, we assume that $\al >1$. The operator $A_N$ on $L^1(B_N)$ is defined initially on the set $\cD(B_N)$ of locally constant functions, then extended by duality to the distributions from $\cD^\pr (B_N)$, and then its domain $\cD (A_N)$ in $L^1(B_N)$ is defined on the set of those $\vph \in L^1(B_N)$, for which $A_N\vph=\DAN\vph - \la_N \vph \in L^1(B_N)$. This definition agrees with the ones in terms of the corresponding semigroup of operators or in terms of harmonic analysis on $B_N$. Note that locally constant functions on $B_N$ belong to $Dom(A_N)$.

The resolvent $(A_N+\mu I)^{-1}$, $\mu>0$ was calculated in \cite{K2018}. It has the form
\[\Big( (A_N+\mu I)^{-1}u\Big)(x)=\int\limits_{B_N}r_\mu(x-\xi)u(\xi)\, d\xi +\mu^{-1}q^{-N}\int\limits_{B_N}u(\xi)\, d\xi, \quad u\in L^1 (B_N), \ \mu>0,
\]
where
\[r_\mu(x)=\int\limits_{|\eta|_K\geq q^{-N+1}}\frac{\chi(\eta\, x)}{|\eta|_K^\al-\la_N+\mu}\, d\eta.
\]
We set $\mu = \la_N$ and apply the operator  $(A_N+\la_NI)^{-1}$ to both sides of \eqref{7-6}.
Taking into account our ``boundary condition'' \eqref{7-5} we find that
\begin{equation}\Label{7-7}\
u(x) +\la_N\int\limits_{B_N}r_{\la_N} (x-\xi)u(\xi)\,d\xi=\int\limits_{B_N}r_{\la_N} (x-\xi)f(\xi)\,d\xi, \quad x\in B_N.
\end{equation}
We have proved the following result
\begin{theorem}\Label{T2}\
Suppose that $\al >0$, $f\in L^1(B_N)$, $\int\limits_{B_N} f(x)\,dx=0$.

Define a function $u(x)$ as follows:
\begin{itemize}
\item $u(x)$, $x\in B_N$ is a solution of the Fredholm integral equation \eqref{7-7};
\item $u(y)$ is equal to a constant $h$ satisfying \eqref{7-5} for all $y$, $|y|_K>q^N$.
\end{itemize}
Then $D^\al u\in L^1)B_N)$, and $u$ is a solution of the problem \eqref{7-1}-\eqref{7-2}.
\end{theorem}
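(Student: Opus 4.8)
The plan is to verify directly that the function $u$ assembled from the two prescriptions is a genuine solution of \eqref{7-1}--\eqref{7-2}, by reversing the chain of implications that produced the Fredholm equation \eqref{7-7}. Three points must be checked: the Neumann condition \eqref{7-2}, the interior equation \eqref{7-1}, and the regularity $D^\al u\in L^1(B_N)$.

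The Neumann condition requires almost no work. By construction $u$ equals the constant $h$ on $\OO^c$, and $h$ is chosen to satisfy \eqref{7-5}, that is $h=q^{-N}\int_{B_N}u(z)\,dz$ is the mean of $u$ over $\OO$. The computation preceding the theorem shows that for $|x|_K>q^N$ the ultrametric inequality gives $(\cN_\al u)(x)=c_{1,\al}|x|_K^{-\al-1}\bigl(q^N u(x)-\int_\OO u(y)\,dy\bigr)$, so that $\cN_\al u\equiv 0$ on $\OO^c$ is equivalent precisely to \eqref{7-5}. Hence \eqref{7-2} with $g=0$ holds automatically. There is no circularity: \eqref{7-7} fixes $u|_{B_N}$ with no reference to $h$, and $h$ is only read off afterwards as the mean.

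The heart of the argument is to recover the interior relation \eqref{7-6} from \eqref{7-7}, that is, to undo the application of the resolvent. For this I would apply to both sides of \eqref{7-7} the regional operator $\DAN+\la_N$, whose inverse on zero-mean functions is exactly convolution with the kernel $r_{\la_N}$ (the resolvent at $\mu=\la_N$). Two identities are decisive: the mapping property $(\DAN+\la_N)\int_{B_N}r_{\la_N}(\cdot-\xi)v(\xi)\,d\xi=v-q^{-N}\int_{B_N}v(\xi)\,d\xi$, whose correction term is the mean of $v$ and reflects that the kernel of $\DAN$ consists of the constants (indeed $\DAN h=0$); and $\int_{B_N}r_{\la_N}(\cdot-\xi)\,d\xi=0$, which follows from \eqref{2-3} since the $\eta$-integral defining $r_{\la_N}$ runs over $|\eta|_K\ge q^{-N+1}$. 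Feeding \eqref{7-7} through $\DAN+\la_N$ and using these identities, the mean of $f$ drops out by the hypothesis $\int_{B_N}f\,dx=0$, while the mean of $u$ reproduces the term $\la_N h$, so that \eqref{7-6} is recovered. Combining \eqref{7-6} with the pointwise decomposition \eqref{7-4} and the value \eqref{7-3} of the tail integral --- legitimate because $u=h$ on $\OO^c$ --- gives $D^\al u=\DAN u+\la_N u-\la_N h=f$ on $\OO$, i.e. \eqref{7-1}.

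The regularity statement then comes for free. Since $u$ solves \eqref{7-7}, it is exhibited as a combination of convolutions of the $L^1$ functions $f$ and $u$ with $r_{\la_N}$, and by the $L^1$-theory of \cite{K2018,KK} this convolution maps $L^1(B_N)$ into the domain $\cD(A_N)$; hence $\DAN u\in L^1(B_N)$, and with $u\in L^1(B_N)$ and the constant tail the representation \eqref{7-4} yields $D^\al u\in L^1(B_N)$ (in fact $D^\al u=f$). The step I expect to be the real obstacle is the middle one: making the inversion of the resolvent rigorous in spite of the fact that $\DAN+\la_N$, while invertible, sits next to the non-invertible $\DAN$, so that the constant (mean) mode must be tracked with care. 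It is precisely there that the compatibility condition $\int_{B_N}f\,dx=0$ and the definition of $h$ must be shown to dovetail so as to reproduce \eqref{7-6} exactly.
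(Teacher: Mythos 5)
Your overall plan --- verify the Neumann condition directly, invert the resolvent to recover \eqref{7-6}, then use the pointwise decomposition \eqref{7-4} --- is the right one, and it is actually closer to what Theorem \ref{T2} asserts than the paper's own argument, which only runs the derivation forward (from the problem to \eqref{7-7}) and then declares the theorem proved. Your treatment of the Neumann condition and of \eqref{7-4} is correct, as are your two ``decisive identities'': with $Rv=\int_{B_N}r_{\la_N}(\cdot-\xi)v(\xi)\,d\xi$ one indeed has $(\DAN+\la_N)Rv = v - q^{-N}\int_{B_N}v\,d\xi$ and $R1=0$ by \eqref{2-3}. The gap is in the middle step, which you yourself flag as the delicate one but then assert without carrying out: it does not close. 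Feeding \eqref{7-7}, i.e.\ $u+\la_N Ru = Rf$, through $\DAN+\la_N$ with exactly your identities gives
\[
\big(\DAN+\la_N\big)u + \la_N\Big(u - q^{-N}\!\!\int_{B_N}\!u(\xi)\,d\xi\Big) = f - q^{-N}\!\!\int_{B_N}\!f(\xi)\,d\xi,
\]
that is $\DAN u + 2\la_N u - \la_N h = f$, which carries an extra $\la_N u$ compared with \eqref{7-6}. An independent cross-check: on the non-constant Fourier modes \eqref{7-7} forces $\wh{u}(\eta)=(|\eta|_K^\al+\la_N)^{-1}\wh{f}(\eta)$, whereas \eqref{7-6} requires $\wh{u}(\eta)=|\eta|_K^{-\al}\wh{f}(\eta)$; moreover, integrating \eqref{7-7} over $B_N$ forces $h=q^{-N}\int_{B_N}u=0$. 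So a function built from \eqref{7-7} as printed does not satisfy \eqref{7-1}, and the claim that ``\eqref{7-6} is recovered'' is false as stated.

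What your (correct) identities actually show is that the image of \eqref{7-6}, i.e.\ of $(\DAN+\la_N)u=f+\la_N h$, under the resolvent at $\mu=\la_N$ is $u=Rf+h$, equivalently $u(x)-q^{-N}\int_{B_N}u(\xi)\,d\xi=\int_{B_N}r_{\la_N}(x-\xi)f(\xi)\,d\xi$, whose solutions are exactly $Rf+\const$; from \emph{that} equation your reverse argument does recover \eqref{7-6}, hence \eqref{7-1}, and your regularity and Neumann-condition steps then go through. As written, however, your proposal takes \eqref{7-7} at face value and papers over the discrepancy; an honest execution of your own computation exposes it, so you must either replace \eqref{7-7} by the corrected equation or accept that the verification fails.
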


Note that already a weak solution of \eqref{7-1}-\eqref{7-2} is unique up to an additive constant. Theorem~ \ref{T2} specifies a family of solutions agreed with the geometry of the problem.

The equation \eqref{7-7} looks like a non-Archimedean analog of the classical Wiener-Hopf equation. However the basic classical tool, the reduction to boundary value problems for analytic functions (see, for example, the recent survey \cite{KAMR}), is not available for the non-Archimedean situation.

\vspace{3mm}

{\bf 7.2} The inhomogeneous Neumann condition
\[\big( \cN_\al u\big) u(x)=g(x), \quad |x|_K >q^N,
\]
with $g\in L^1(K\bs B_N)$, is equivalent to the relation
\[c_{1,\al}\big[q^Nu(x) - \int\limits_{B_N}u(y)\,dy\big]=|x|_K^{1+\al}g(x), \quad |x|_K>q^N.
\]
Repeating the above arguments, we come instead of \eqref{7-6} to the equation
\[
\big(\DAN u\big)(x) + \la_N u(x) = \la_N h +q^{-N}\int\limits_{|z|_K>q^N}g(z)\,dz \cdot |x|_K^{1+\al}+ f(x), \quad x\in K\bs B_N,
\]
which is then used as above.

\vspace{3mm}

{\bf 7.3} The multi-dimensional Neumann problem \eqref{1-1}-\eqref{1-2} can be investigated as follows. Let $L$ be an unramified extension of degree $n$ of the local field $K$. It is known \cite{K2021} that the expansion with respect to the canonical basis in $L$ defines an isometric linear isomorphism between $L$ and $K^n$, which identifies $D^{\al,n}$ defined in \eqref{1-1} with the operator
\begin{equation}\Label{7-8}\
\big(D^\ga_L u\big)(x)=\frac{q^{n\ga}-1}{1-q^{-n(\ga+1)}}\int\limits_L\frac{u(x)-u(z)}{|x-z|^{\ga+1}_L}\, dz,
\end{equation}
where $\ga = \al/n$. The multidimensional Neumann problem can be studied by applying the above one-dimensional results to the operator \eqref{7-8}.

\bigskip
\section {Acknowledgements}
The first author acknowledges the funding support in the framework of the project ``Spectral Optimization: From Mathematics to Physics and Advanced Technology'' (SOMPATY) received from the European Union’s Horizon 2020 research and innovation programme under the Marie Skłodowska-Curie grant agreement No 873071. The second author acknowledges the financial support by the National Research Foundation of Ukraine (Project number 2023.03/0002).


\end{document}